\numberwithin{equation}{section}
\newextarrow{\xbigtoto}{{20}{20}{20}{20}}
   {\bigRelbar\bigRelbar{\bigtwoarrowsleft\rightarrow\rightarrow}}
\theoremstyle{definition}
\newtheorem{thm}{Theorem}[section]
\newtheorem{lem}[thm]{Lemma}
\newtheorem{exa}[thm]{Example}
\newtheorem{prop}[thm]{Proposition}
\newtheorem{defi}[thm]{Definition}
\newtheorem{rem}[thm]{Remark}
\newtheorem{note}[thm]{Notation}
\DeclareMathOperator{\red}{\mathrm{red}}
\DeclareMathOperator{\msp}{\mathrm{Spec}}
\DeclareMathOperator{\pr}{\mathrm{pr}}
\DeclareMathOperator{\Hom}{\mathrm{Hom}}
\DeclareMathOperator{\Spec}{\mathrm{Spec}}
\DeclareMathOperator{\mo}{\mathcal{O}}
\newcommand{\mr}[1]{\mathrm{#1}}
\newcommand{\mb}[1]{\mathbb{#1}}
\newcommand{\mc}[1]{\mathcal{#1}}
\newcommand{\ov}[1]{\overline{#1}}
\begin{document}

\title{Descent theory of simple sheaves on $C_1$-fields}

\author[A. Dan]{Ananyo Dan}

\address{Basque Centre for Applied Mathematics, Alameda de Mazarredo 14,
48009 Bilbao, Spain}

\email{adan@bcamath.org}

\author[I. Kaur]{Inder Kaur}

\address{Pontifícia Universidade Cat\'{o}lica do Rio de Janeiro (PUC-Rio), R. Marqu\^{e}s de S\~{a}o Vicente, 225 - G\'{a}vea, Rio de Janeiro - RJ, 22451-900, Brasil }

\email{inder@mat.puc-rio.br}

\subjclass[2010]{Primary $12$G$05$, $16$K$50$, $14$D$20$, $14$J$60$, Secondary $14$L$24$, $14$D$22$}

\keywords{Geometrically stable sheaves, Galois descent, Galois cohomology, Brauer group, $C_1$-fields}

\date{January $31$, $2019$}

\begin{abstract}
Let $K$ be a $C_1$-field of any characteristic and 
$X$ a projective variety over $K$. In this article we prove that for a finite Galois extension $L$ of $K$, a simple sheaf with covering datum on $X \times_K L$ descends to a simple sheaf on $X$. 
As a consequence, we show that there is a $1-1$ correspondence between the set of geometrically stable sheaves on $X$ with fixed Hibert polynomial $P$ and the set of $K$-rational points of the 
corresponding moduli space.
\end{abstract}

\maketitle

\section{Introduction}
Let $f:Y \to X$ be a morphism of schemes and $\pr_i:Y \times_X Y \to Y$ the natural projection morphisms. For any quasi-coherent sheaf $\mc{F}$
on $Y$, a \emph{covering datum of} $\mc{F}$ is an isomorphism $\phi:\pr_1^*\mc{F} \to \pr_2^*\mc{F}$. If $f$ is faithfully flat and quasi-compact then the functor
from the category of quasi-coherent sheaves on $X$ to the category of quasi-coherent sheaves on $Y$ with covering datum, induced by pull-back by $f$, is fully faithful 
(see \cite[\S $6.1$, Proposition $1$]{bn}). 
Let us consider the simple case when $X$ is a projective variety over a field $K$, $L$ a finite Galois extension of $K$ and $Y := X \times_K \msp(L)$
is the base change of $X$ by the field extension $L$ of $K$. For a coherent sheaf $\mc{E}_L$ on $Y$, descent theory gives a 
criterion for when there exists a coherent sheaf $\mc{E}$ on $X$ such that $f^*\mc{E} \cong \mc{E}_L$. 
The descent datum associated to $\mc{E}_L$ consists of a covering datum satisfying a cocycle condition. In general, it is not possible 
to associate to $\mc{E}_L$ a descent datum, for example, if $\mc{E}_L$ is the structure sheaf of an $L$-point on $Y$ that does not come from a $K$-point of $X$, then
there does not exist a descent datum associated to $\mc{E}_L$.

\vspace{0.2 cm}
Recall that a field is called $C_1$ (pseudo-algebraically closed) if any degree $d$ polynomial in $n$ variables with coefficients in the field and $n>d$ has a non-trivial solution. 
These fields have been studied by Tsen \cite{T}, Lang \cite{lang1}, Chevalley \cite{C}, Greenberg \cite{G}, Grabber-Harris-Starr \cite{GHS} and many others (see \cite[\S $2.1$]{ind}
for a short introduction to $C_1$ fields).
In this article, we consider the case when $X$ is a projective variety over a $C_1$-field $K$, $L$ is a finite Galois extension of $K$ and $Y:=X \times_K \Spec(L)$. 
We call a simple (resp. geometrically stable) sheaf $\mc{E}_L$ on $Y$, $K$-\emph{simple} (resp. $K$-\emph{geometrically stable}) 
if one can associate a covering datum to $\mc{E}_L$ (see Definition \ref{elw15}). 

We prove that:

\begin{thm}\label{in1}
 The natural functor from  the category $\mathfrak{S}_X$ of simple sheaves on 
 $X$ to the category $\mathfrak{SK}_Y$ of $K$-simple sheaves on $Y$, defined by pull-back of sheaves via $f$,
is an equivalence of categories. 
\end{thm}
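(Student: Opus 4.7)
The plan is to check fully faithfulness and essential surjectivity separately. Full faithfulness follows directly from the general descent theorem recalled in the introduction: since $f : Y \to X$ is faithfully flat and quasi-compact, the pullback functor from coherent sheaves on $X$ to coherent sheaves on $Y$ equipped with a covering datum is fully faithful \cite[\S 6.1, Proposition 1]{bn}, and restriction to the subcategories of simple sheaves inherits this property. The content of the theorem is thus essential surjectivity.

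For the latter, fix a $K$-simple sheaf $(\mc{E}_L, \phi)$ on $Y$. Since $L/K$ is finite Galois with group $G := \mr{Gal}(L/K)$, the isomorphism $L \otimes_K L \cong \prod_{\sigma \in G} L$ produces a canonical decomposition $Y \times_X Y \cong \coprod_{\sigma \in G} Y_\sigma$, and under this decomposition the covering datum $\phi$ translates into a family of isomorphisms $\phi_\sigma : (1_X \times \sigma)^* \mc{E}_L \to \mc{E}_L$ indexed by $\sigma \in G$. To obtain a sheaf $\mc{E}$ on $X$ with $f^*\mc{E} \cong \mc{E}_L$, one must promote $\phi$ to a descent datum, i.e.\ establish the cocycle relation $\phi_{\sigma\tau} = \phi_\sigma \circ (1_X \times \sigma)^*\phi_\tau$. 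The defect
\[
c(\sigma, \tau) \,:=\, \phi_{\sigma\tau}^{-1} \circ \phi_\sigma \circ (1_X \times \sigma)^* \phi_\tau \;\in\; \mr{Aut}(\mc{E}_L)
\]
is an automorphism of $\mc{E}_L$, and by simplicity $\mr{End}_{\mo_Y}(\mc{E}_L) = L$, so $c$ is a $2$-cocycle with values in $L^*$ whose class lies in $H^2(G, L^*) \cong \mr{Br}(L/K)$.

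The hypothesis that $K$ is a $C_1$-field is what makes the argument work: by the Tsen--Chevalley--Lang theorem one has $\mr{Br}(K) = 0$, so that the subgroup $\mr{Br}(L/K)$ also vanishes. Therefore $c$ is a coboundary, i.e.\ there exist scalars $\lambda_\sigma \in L^*$, $\sigma \in G$, for which the twisted maps $\lambda_\sigma \phi_\sigma$ satisfy the cocycle condition and constitute a bona fide Galois descent datum. Standard Galois descent then yields a coherent sheaf $\mc{E}$ on $X$ with $f^*\mc{E} \cong \mc{E}_L$, and the identity $\mr{End}_{\mo_X}(\mc{E}) \otimes_K L \cong \mr{End}_{\mo_Y}(\mc{E}_L) = L$ forces $\mr{End}_{\mo_X}(\mc{E}) = K$, so $\mc{E}$ is itself simple.

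The main obstacle is the identification of the defect $c$ with a class in $\mr{Br}(L/K)$ and its subsequent vanishing; both steps rest on the two standing hypotheses, namely simplicity of $\mc{E}_L$ (which reduces $\mr{Aut}(\mc{E}_L)$ to the scalars $L^*$) and $C_1$-ness of $K$ (which kills the resulting Brauer class). The remainder is formal Galois descent.
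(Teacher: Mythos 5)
Your proof is correct and follows essentially the same route as the paper: the paper's Proposition \ref{c8} likewise translates the covering datum into isomorphisms $\lambda'_\sigma:\mc{E}_L\to\sigma^*\mc{E}_L$, uses simplicity to identify the cocycle defect with a $2$-cocycle valued in $L^\times$, kills it via $H^2(K,\mb{G}_m)=0$ for $C_1$-fields, rescales to obtain a genuine descent datum, and concludes by Galois descent plus the endomorphism-algebra argument for simplicity of the descended sheaf. The only cosmetic difference is that the paper produces each $\lambda'_\sigma$ from the section $p_\sigma:X_L\to X_{L\otimes_K L}$ rather than from the full decomposition $L\otimes_K L\cong\prod_{\sigma}L$, which it reserves for the proof of Theorem \ref{elw9}.
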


 Clearly, any coherent sheaf $\mc{E}_L$ with covering datum on $Y$ descends to $X$ 
if the covering datum satisfies the cocycle condition.
However, we show that the cocycle condition is indeed satisfied if $\mc{E}_L$ is $K$-simple and 
$H^2(K,\mb{G}_m)$ vanishes (see proof of Proposition \ref{c8}). Since $K$ is a $C_1$-field, the cohomology group $H^2(K,\mb{G}_m)$ is
trivial (see \cite[Proposition $6.2.3$]{gill}). We then give an application of Theorem \ref{in1}. 
Fix $P$ the Hilbert polynomial of a coherent sheaf on $X$ with rank coprime to degree. 
Denote by $M_X(P)$ the moduli scheme of geometrically stable sheaves on $X$ with Hilbert polynomial $P$. 
Recall, a point $x \in X$ is called a $K$-\emph{rational point} if the corresponding residue field is contained in $K$.
We show:

\begin{thm}\label{elw11} 
There is a $1-1$ correspondence between the set of $K$-rational points of $M_X(P)$ and the set of 
geometrically stable sheaves on $X$ with Hilbert polynomial $P$. 
\end{thm}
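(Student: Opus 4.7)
The strategy is to construct a bijection between the two sets. In one direction, any geometrically stable sheaf $\mc{E}$ on $X$ with Hilbert polynomial $P$ determines, via the universal property of the coarse moduli space, a classifying morphism $\Spec(K) \to M_X(P)$, hence a $K$-rational point. Since the rank and degree of $P$ are coprime, geometric stability coincides with geometric semistability and the only automorphisms of a stable sheaf are scalars; thus non-isomorphic geometrically stable sheaves give distinct points of $M_X(P)$. This establishes that the map from stable sheaves to $K$-rational points is injective.

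For surjectivity, fix $x \in M_X(P)(K)$. Since $M_X(P)$ is a coarse moduli space, the base change $x_{\bar{K}}$ is represented by a geometrically stable sheaf on $X_{\bar K}$, which is already defined over some finite Galois extension $L/K$; that is, there exists a geometrically stable sheaf $\mc{E}_L$ on $Y:= X \times_K \Spec(L)$ whose moduli class is $x_L$. For each $\sigma \in G := \mathrm{Gal}(L/K)$, the pull-back $\sigma^*\mc{E}_L$ classifies $\sigma^*(x_L) = x_L$ since $x$ is $K$-rational. Using flat base change $\Hom_{X_L}(\sigma^*\mc{E}_L, \mc{E}_L) \otimes_L \bar{K} \cong \Hom_{X_{\bar{K}}}(\sigma^*\mc{E}_L \otimes \bar{K}, \mc{E}_L \otimes \bar{K})$ together with stability (a nonzero map between stable sheaves of the same Hilbert polynomial is an isomorphism), we conclude that $\sigma^*\mc{E}_L \cong \mc{E}_L$ over $L$. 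Under the identification $Y \times_X Y = \coprod_{\sigma \in G} Y$ coming from the Galois hypothesis, these isomorphisms assemble into an isomorphism $\phi : \pr_1^*\mc{E}_L \to \pr_2^*\mc{E}_L$, so $\mc{E}_L$ is $K$-simple in the sense of Definition \ref{elw15}.

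Theorem \ref{in1} now yields a simple sheaf $\mc{E}$ on $X$ with $f^*\mc{E} \cong \mc{E}_L$. Because the Hilbert polynomial is preserved under base change and geometric stability is tested after passing to $\bar K$, $\mc{E}$ is geometrically stable with Hilbert polynomial $P$. The class of $\mc{E}$ in $M_X(P)(K)$ equals $x$, since the two agree after base change to $L$ where both are represented by $\mc{E}_L$; this inverts the forward map and finishes the bijection.

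The main obstacle is that the pointwise isomorphisms $\sigma^*\mc{E}_L \cong \mc{E}_L$ are only canonical up to a nonzero scalar, and the naive attempt to glue them into descent data encounters a cocycle obstruction valued in $H^2(G, L^*) \subseteq H^2(K, \mb{G}_m)$. This is precisely the difficulty that Theorem \ref{in1} was built to circumvent, by exploiting the vanishing of $H^2(K, \mb{G}_m)$ for $C_1$-fields; after this, the construction of $\mc{E}$ out of $\mc{E}_L$ and the verification of its moduli-theoretic properties are routine.
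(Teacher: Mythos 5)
Your proof is correct and follows the same overall architecture as the paper's: realize the $K$-point by a geometrically stable sheaf $\mc{E}_L$ over a finite Galois extension $L/K$, produce isomorphisms $\sigma^*\mc{E}_L \cong \mc{E}_L$ for $\sigma \in \mr{Gal}(L/K)$, assemble them into a covering datum via the identification $L\otimes_K L \cong \prod_\sigma L$, and invoke the descent theorem (Theorem \ref{in1}/\ref{elw7}), which is where the $C_1$ hypothesis and the vanishing of $H^2(K,\mb{G}_m)$ are consumed. Where you genuinely diverge is in the intermediate step: the paper works concretely with the GIT construction, picking a closed point $y$ of the fiber $\mc{R}_K$ of the Quot-scheme quotient $\pi:\mc{R}\to M_X(P)$ over $x$, and deduces $f_\sigma^*\mc{E}_L\cong\mc{E}_L$ from the fact that $\mc{R}_K$ is a single orbit and from the universal property of the Quot scheme; you instead argue abstractly from the coarse moduli property over $\ov{K}$, then transfer the isomorphism down to $L$ using flat base change for $\Hom$ together with the rigidity lemma that a nonzero map between stable sheaves with equal reduced Hilbert polynomial is an isomorphism. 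Your route avoids unwinding the Quot-scheme presentation and is arguably cleaner, at the cost of leaning on the (standard but unstated) facts that a $\ov{K}$-point of the stable locus is represented by an actual sheaf defined over a finite subextension and that $M_X(P)(L)\to M_X(P)(\ov{K})$ is injective. One small point to tighten: before citing Theorem \ref{in1} you should note that $\mc{E}_L$ is simple --- Definition \ref{elw15} requires simplicity in addition to the covering datum --- which the paper checks explicitly via $\mr{End}(\mc{E}_L)\otimes_L\ov{K}\cong\mr{End}(\mc{E}_{\ov{K}})\cong\ov{K}$, using Lemma \ref{stabsimp}; this follows immediately from geometric stability but should be said.
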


We note that a similar result to Theorem \ref{elw11} for stacks has been proven by Kraschen and Lieblich in  \cite[Proposition $1.1.5$]{KL}. However their proof uses the theory of gerbes. In contrast our proof uses only basic algebraic geometry.   

In \S \ref{sec4} we give possible applications of the above theorems to the existence of rational points ($C_{1}$-conjecture due to Lang, Manin and Koll\'{a}r)
and index of varieties.

\vspace{0.2 cm}
\emph{Acknowledgements} 
The first author is currently supported by ERCEA Consolidator Grant $615655$-NMST and also
by the Basque Government through the BERC $2014-2017$ program and by Spanish
Ministry of Economy and Competitiveness MINECO: BCAM Severo Ochoa
excellence accreditation SEV-$2013-0323$. 
The second author is funded by a CAPES-PNPD fellowship. A part of this work was done when she was visiting ICTP. She warmly thanks ICTP, the Simons Associateship and Prof. 
Carolina Araujo for making this possible.

\section{Galois action on simple sheaves}

We begin by recalling the basic definitions and results we need. 

\begin{defi}\label{desdefi}
Let $f: S' \to S$ be a morphism of schemes.
Set $S'':= S' \times_{S} S',  S''':= S'\times_{S}S'\times_{S}S'$, $\pr_i:S'' \to S'$ and $\pr_{ij}:S''' \to S''$ the natural projections onto the factors 
with indices $i$ and $j$, for $i<j$, $i,j \in \{1,2,3\}$. 
A \emph{descent datum} on a quasi-coherent sheaf $\mc{F}$ on $S'$ is a covering datum $\phi:\pr_1^*\mc{F} \xrightarrow{\sim} \pr_2^*\mc{F}$ on $\mc{F}$ 
which satisfies the cocycle condition $\pr^{*}_{13}\phi = \pr^{*}_{23}\phi \circ \pr_{12}^{*}\phi$ i.e., $\pr_{13}^*\phi$ coincides with 
the composition:
\[\pr_{13}^*\pr_1^*\mc{F} \cong \pr_{12}^*\pr_1^*\mc{F} \xrightarrow{\pr_{12}^*\phi} \pr_{12}^*\pr_2^* \cong \pr_{23}^*\pr_1^*\mc{F} \xrightarrow{\pr_{23}^*\phi} \pr_{23}^*\pr_2^*\mc{F} \cong \pr_{13}^*\pr_2^*\mc{F},\]
where the three isomorphisms follow from $\pr_1 \circ \pr_{13}=\pr_1 \circ \pr_{12}, \, \pr_1 \circ \pr_{23}=\pr_2 \circ \pr_{12}$ and 
$\pr_2 \circ \pr_{23}=\pr_2 \circ \pr_{13}$, respectively.
\end{defi}

\begin{prop}[{\cite[\S $6.1$, Proposition $1$]{bn}}]\label{blrprop}
Notations as in Definition \ref{desdefi}.
Let $f: S' \to S$ be a faithfully flat and quasi-compact morphism of schemes and $\mc{F}$, $\mc{G}$ be  quasi-coherent $S$-modules and set 
$q:= f \circ \pr_1 = f \circ \pr_2$. 
Then, identifying $q^{*}\mc{F}$ (resp. $q^{*}\mc{G}$) canonically with $\pr_i^{*}(f^{*}\mc{F})$ (resp. $\pr_i^{*}(f^{*}\mc{G})$) for $i = 1,2$, the sequence 
\[ \Hom_{S}(\mc{F},\mc{G}) \xrightarrow{f^{*}} \Hom_{S'}(f^{*}\mc{F},f^{*}\mc{G}) \xbigtoto[\pr^{*}_1]{\pr^{*}_2} \Hom_{S''}(q^{*}\mc{F},q^{*}\mc{G}) \]
\noindent is exact. In other words, the functor $\mc{F} \mapsto  f^{*}\mc{F}$ from quasi-coherent $S$-modules to quasi-coherent $S'$-modules with covering datum 
is fully faithful. 
\end{prop}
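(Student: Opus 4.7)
The plan is to use faithfully flat descent for modules, reducing the question from schemes to commutative algebra. Both assertions---injectivity of $f^*$ and exactness at the middle term---concern global sections of quasi-coherent sheaves, so they are local on $S$. Since $f:S' \to S$ is faithfully flat and quasi-compact, I would cover $S$ by affine opens $\Spec(A_i)$, and for each one cover the preimage by finitely many affines whose disjoint union is $\Spec(B_i)$ with $A_i \to B_i$ faithfully flat. This reduces everything to the algebraic claim: for a faithfully flat ring map $A \to B$ and $A$-modules $M, N$, the sequence
\[\Hom_A(M,N) \to \Hom_B(M \otimes_A B, N \otimes_A B) \rightrightarrows \Hom_{B \otimes_A B}(M \otimes_A B \otimes_A B, N \otimes_A B \otimes_A B)\]
is exact.

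The central ingredient is the descent sequence: for any $A$-module $P$, the sequence
\[0 \to P \to P \otimes_A B \xrightarrow{\delta} P \otimes_A B \otimes_A B,\]
with $\delta(p \otimes b) = p \otimes b \otimes 1 - p \otimes 1 \otimes b$, is exact. I would prove this in two steps. If $A \to B$ admits an $A$-linear retraction $\sigma:B \to A$, one writes down an explicit contracting homotopy built from $\sigma$, yielding exactness at once. In general one base-changes the sequence along $A \to B$: the resulting map $B \to B \otimes_A B$, $b \mapsto 1 \otimes b$, acquires the multiplication $b \otimes b' \mapsto bb'$ as a $B$-linear section, so the base-changed sequence is exact by the split case. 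Faithful flatness of $A \to B$ then lifts exactness back to the original complex.

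Granted the key lemma, the Hom exactness follows cleanly. For injectivity of $f^*$, if $\phi:M \to N$ has $\phi \otimes \id_B = 0$, then $\phi(m) \otimes 1 = 0$ in $N \otimes_A B$ for each $m$; applying the key lemma with $P = N$ gives $\phi(m) = 0$. For exactness at the middle, given $\psi:M \otimes_A B \to N \otimes_A B$ satisfying $\pr_1^*\psi = \pr_2^*\psi$, the element $\psi(m \otimes 1)$ is annihilated by $\delta$ for each $m \in M$, so by the key lemma it comes from a unique element $\phi(m) \in N$. Uniqueness forces $A$-linearity of $\phi:M \to N$, and the identity $\phi \otimes \id_B = \psi$ holds because both $B$-linear maps agree on the generating set $\{m \otimes 1\}$.

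The main obstacle is the proof of the key lemma, specifically the construction of the contracting homotopy in the split case---the rest of the argument is essentially formal once this is in hand. One should also be careful in the reduction step that quasi-compactness is genuinely used: it is what permits replacing an arbitrary faithfully flat cover by a single affine map $\Spec(B) \to \Spec(A)$ over each affine in $S$.
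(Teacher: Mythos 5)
Your proof is correct and is essentially the standard argument from the cited source (Bosch--Lütkebohmert--Raynaud, \S 6.1): the paper itself gives no proof of this proposition, merely importing it, and your reduction to the affine case followed by exactness of the Amitsur-type sequence $0 \to P \to P\otimes_A B \to P\otimes_A B\otimes_A B$ (split case via a contracting homotopy, general case by faithfully flat base change) is exactly how the reference proceeds.
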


We now recall the definition of semi-stable sheaves and simple sheaves.

\begin{defi} 
Let $\mc{E}$ be a coherent sheaf with support of dimension $d$. The Hilbert polynomial $P(\mc{E})(t)$ of $\mc{E}$ can be expressed as (see {\cite[Lemma $1.2.1$]{HL}}) 
\[P(\mc{E})(t) := \chi(\mc{E}\otimes \mc{O}_{X_k}(t)) =  \displaystyle\sum\limits_{i=0}^{d}\alpha_{i}(\mc{E})\dfrac{t^{i}}{i!} 
   \mbox{ for } t>>0.\] 
   The reduced Hilbert polynomial is defined as $P_{\red}(\mc{E})(t) := \dfrac{P(\mc{E})(t)}{\alpha_{d}(\mc{E})}$ . 
The sheaf $\mc{E}$ is called \emph{Gieseker (semi)stable} if for any proper subsheaf 
$\mc{F} \subset \mc{E}$, $P_{\red}(\mc{F})(t) (\leq) < P_{\red}(\mc{E})(t)$ for all $t$ large enough. 
In other words, $\mc{E}$ is (semi)stable if properly included subsheaves
have (strictly) smaller reduced Hilbert polynomials.
\end{defi} 
   
\begin{defi}
A sheaf $\mc{E}$ defined on a projective variety defined over a field $k$ is called \emph{simple} if $\mr{End}(E) \simeq k$. 
\end{defi}

\begin{lem}[{\cite[Corollary $1.2.8$]{huy}}]\label{stabsimp}
If $\mc{E}$ is a stable sheaf on a projective variety defined over an algebraically closed field, say $k$, then $\mr{End}(E) \simeq k$.   
\end{lem}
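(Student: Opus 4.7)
The plan is to adapt the classical Schur-type argument, leveraging the fact that any endomorphism of a coherent sheaf on a projective variety lives in a finite-dimensional $k$-algebra, so algebraic closure of $k$ supplies an eigenvalue we can subtract.

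First, I would observe that $\End(\mc{E})$ is a finite-dimensional $k$-algebra: since $\mc{E}$ is coherent on a projective $k$-variety, $\Hom(\mc{E},\mc{E})$ is a finite-dimensional $k$-vector space. The inclusion $k \cdot \Id \hookrightarrow \End(\mc{E})$ is automatic, so the content is to show the opposite inclusion.

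Fix an arbitrary $\phi \in \End(\mc{E})$. Since $k$ is algebraically closed and $\End(\mc{E})$ is finite-dimensional, $\phi$ has an eigenvalue $\lambda \in k$, so $\psi := \phi - \lambda \cdot \Id$ is an endomorphism of $\mc{E}$ with a nonzero kernel. I would argue by contradiction: suppose $\psi \neq 0$. Then $\mc{K} := \ker \psi$ is a proper subsheaf of $\mc{E}$ (proper because $\psi \neq 0$, nonzero because $\lambda$ is an eigenvalue), and $\Ima(\psi) \cong \mc{E}/\mc{K}$ is also a proper nonzero subsheaf of $\mc{E}$.

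Now I would invoke stability twice to derive a contradiction. On one hand, $\Ima(\psi)$ is a proper nonzero subsheaf of $\mc{E}$, so stability yields $P_{\red}(\Ima(\psi))(t) < P_{\red}(\mc{E})(t)$ for $t$ sufficiently large. On the other hand, $\mc{K}$ is a proper nonzero subsheaf of $\mc{E}$, so stability applied to the short exact sequence $0 \to \mc{K} \to \mc{E} \to \Ima(\psi) \to 0$ forces the reverse inequality $P_{\red}(\mc{E})(t) < P_{\red}(\Ima(\psi))(t)$ for $t$ large (the reduced Hilbert polynomial of the quotient dominates that of $\mc{E}$ whenever that of the subsheaf is strictly smaller). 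These two inequalities are incompatible, forcing $\psi = 0$, i.e.\ $\phi = \lambda \cdot \Id \in k \cdot \Id$. The main (minor) obstacle here is just keeping track of the stability inequalities for the quotient sheaf, since they are phrased in the definition only for subsheaves; but this is routine by comparing leading coefficients of Hilbert polynomials in the short exact sequence.
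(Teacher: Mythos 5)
The paper offers no proof of this lemma; it is quoted directly from Huybrechts--Lehn \cite[Corollary 1.2.8]{huy}, so there is nothing in the text to compare against except the citation. Your argument is the standard Schur-type proof that the citation points to, and it is essentially correct: the image of $\psi$ is simultaneously a nonzero subsheaf and a quotient of $\mc{E}$, and since $P_{\red}(\mc{E})$ is the $\alpha_d$-weighted average of $P_{\red}(\mc{K})$ and $P_{\red}(\mc{E}/\mc{K})$ (both pure of the same dimension as $\mc{E}$, being isomorphic to subsheaves of the pure sheaf $\mc{E}$), the two stability inequalities are indeed incompatible. The one step you state too quickly is ``$\phi$ has an eigenvalue $\lambda$, so $\phi-\lambda\cdot\Id$ has nonzero kernel'': what algebraic closedness gives directly is a root $\lambda$ of the minimal polynomial of $\phi$ in the finite-dimensional algebra $\mr{End}(\mc{E})$, i.e.\ that $\phi-\lambda\cdot\Id$ is not invertible. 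To get a nonzero kernel you should add either that the product $\prod_i(\phi-\lambda_i\cdot\Id)^{e_i}=0$ cannot have all factors injective (a composition of injective maps of a nonzero sheaf is nonzero), or that an injective endomorphism of a coherent sheaf on a projective variety is automatically an isomorphism because its cokernel has zero Hilbert polynomial. With that sentence added, the proof is complete; Huybrechts--Lehn organize the same mathematics slightly differently (first: any nonzero endomorphism of a stable sheaf is an isomorphism, so $\mr{End}(\mc{E})$ is a finite-dimensional division algebra; then $k[\phi]$ is a finite field extension of $k$, hence equal to $k$), but the content is identical.
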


\begin{note}\label{e1}
 Let $K$ be a $C_1$ field of any characteristic, $X$ a projective variety over $K$. Let $K \subset L$ be an algebraic field extension of $K$ (not necessarily finite). There exist natural morphisms 
 \[\pr_{1,L}:L \to L \otimes_K L, \, \pr_{2,L}:L \to L \otimes_K L \, \mbox{ where } \pr_{1,L}(a)=a \otimes 1 \mbox{ and } \pr_{2,L}(a)=1 \otimes a.\]
 This induces morphisms \[\pr_{i,L}:X_{L \otimes_K L}  \to X_{L} \, \mbox{ for } \, i=1,2.\]
 Denote by $G_L:=\mr{Gal}(L/K)$ the Galois group. For any $\sigma \in G_L$, we denote by 
 \[\sigma:X_{L} \to X_{L}\]
 the induced natural morphism. Moreover, for $\sigma, \tau \in G_L$, we have $\sigma\tau:L \xrightarrow{\tau} L \xrightarrow{\sigma} L$.
 As taking spectrum is contravariant, this induces $(\sigma\tau):X_{L} \xrightarrow{\sigma} X_{L} \xrightarrow{\tau} X_{L}$. Thus, for any coherent sheaf $\mc{E}$
 on $X_{L}$, the pull-back $(\sigma\tau)^*\mc{E}=(\sigma^* \circ \tau^*) \mc{E}$. In the case $L=\ov{K}$, the algebraic closure, denote by $\pr_i:=\pr_{i,\ov{K}}$ and $G:=G_{\ov{K}}$.
  \end{note}
   
 \begin{defi}\label{elw15}
 Recall, a sheaf $\mc{E}$ on $X_L$ is called \emph{geometrically stable} if for any field extension 
 $L'$ of $L$, the sheaf $\mc{E} \otimes_L L'$ is stable over $X_L \times_L \Spec(L')$.
We call a  simple (resp. geometrically stable) sheaf $\mc{E}$ on $X_L$, $K$-\emph{simple} (resp. $K$-\emph{geometrically stable})
  if there exists an isomorphism $\psi:\pr_{1,L}^*\mc{E} \to \pr_{2,L}^*\mc{E}$. In other words, a $K$-simple (resp.  $K$-{geometrically stable}) sheaf
  is a simple (resp. geometrically stable) sheaf on $X_L$ such that one can associate to it a covering datum. 
 \end{defi}
 
We now study the action of the Galois group $G_L$ on a simple sheaf $\mc{E}$ on $X_L$. We observe that in the case $L$ is a finite Galois extension of $K$, $\mc{E}$ descends to $X$ if and only if it is $K$-simple (Theorem \ref{elw7}). 

 \begin{prop}\label{c8}
  Let $\mc{E}$ be a $K$-simple sheaf on $X_{L}$. Then, there exists a collection $(\lambda_\sigma)_{\sigma \in G_L}$ of isomorphisms
  $\lambda_\sigma:\mc{E} \to \sigma^*\mc{E}$ satisfying the cocycle condition:
  \[(\sigma^*\lambda_\tau) \circ \lambda_\sigma=\lambda_{\sigma\tau} \, \mbox{ for any pair } \, \sigma, \tau \in G_L.\]
 \end{prop}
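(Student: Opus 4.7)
The plan is to produce a candidate family $(\lambda_\sigma)_{\sigma\in G_L}$ directly from the covering datum $\psi$, measure the failure of the cocycle condition by a $2$-cocycle of $G_L$ with values in $L^\times$, and kill this obstruction using the vanishing of $H^2(K,\mb{G}_m)$ furnished by the $C_1$-hypothesis.

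\emph{Construction of the $\lambda_\sigma$.} For each $\sigma\in G_L$, the $K$-algebra homomorphism $L\otimes_K L\to L$, $a\otimes b\mapsto a\cdot\sigma(b)$, is well-defined because $\sigma$ fixes $K$, and after base change by $X$ yields a morphism $i_\sigma\colon X_L\to X_{L\otimes_K L}$ satisfying $\pr_{1,L}\circ i_\sigma=\mr{Id}_{X_L}$ and $\pr_{2,L}\circ i_\sigma=\sigma$. Setting $\lambda_\sigma:=i_\sigma^*\psi$ produces an isomorphism $\lambda_\sigma\colon\mc{E}\to\sigma^*\mc{E}$, since $i_\sigma^*\pr_{1,L}^*\mc{E}\cong\mc{E}$ and $i_\sigma^*\pr_{2,L}^*\mc{E}\cong\sigma^*\mc{E}$.

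\emph{Obstruction as a $2$-cocycle.} Since $\mc{E}$ is simple, $\mr{End}(\mc{E})=L$ and hence $\mr{Aut}(\mc{E})=L^\times$. Both $(\sigma^*\lambda_\tau)\circ\lambda_\sigma$ and $\lambda_{\sigma\tau}$ are isomorphisms $\mc{E}\to(\sigma\tau)^*\mc{E}$, so they differ by a unique scalar $c(\sigma,\tau)\in L^\times$ defined by $(\sigma^*\lambda_\tau)\circ\lambda_\sigma=c(\sigma,\tau)\cdot\lambda_{\sigma\tau}$. A direct associativity computation---comparing the two ways of unravelling a map $\mc{E}\to(\sigma\tau\rho)^*\mc{E}$ via triple composition---yields the identity $\sigma(c(\tau,\rho))\cdot c(\sigma,\tau\rho)=c(\sigma,\tau)\cdot c(\sigma\tau,\rho)$, so $c$ is a $2$-cocycle for the natural $G_L$-action on $L^\times$.

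\emph{Vanishing and rescaling.} Since $K$ is $C_1$, $H^2(K,\mb{G}_m)=\mr{Br}(K)=0$ by \cite[Proposition $6.2.3$]{gill}. The inflation-restriction five-term exact sequence, together with Hilbert $90$ for $\ov{K}/L$ (and passage to continuous cohomology in case $L/K$ is infinite), gives an injection $H^2(G_L,L^\times)\hookrightarrow H^2(K,\mb{G}_m)=0$. Hence $[c]=0$, so there exists a $1$-cochain $\mu\colon G_L\to L^\times$ with $c(\sigma,\tau)=\mu_\sigma\cdot\sigma(\mu_\tau)\cdot\mu_{\sigma\tau}^{-1}$. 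Replacing each $\lambda_\sigma$ by $\mu_\sigma^{-1}\lambda_\sigma$ then produces a family still consisting of isomorphisms $\mc{E}\to\sigma^*\mc{E}$ which now satisfies the cocycle identity $(\sigma^*\lambda_\tau)\circ\lambda_\sigma=\lambda_{\sigma\tau}$, as a short direct verification confirms.

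The main obstacle is the identification of the obstruction class with an element of $\mr{Br}(K)$: \emph{a priori} $c$ only lives in the Galois cohomology of $G_L$, and what makes the argument go through is that simplicity of $\mc{E}$ collapses its automorphism group to $L^\times$, so that the inflation-restriction machinery applies and the whole obstruction is annihilated by $\mr{Br}(K)=0$. The construction of $\lambda_\sigma$ from $\psi$, the verification of the $2$-cocycle identity, and the coboundary rescaling are then essentially formal.
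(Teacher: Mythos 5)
Your proposal is correct and follows essentially the same route as the paper: construct $\lambda'_\sigma$ by pulling back the covering datum along the section $X_L\to X_{L\otimes_K L}$ induced by $a\otimes b\mapsto a\sigma(b)$, use simplicity to turn the failure of the cocycle condition into a scalar $2$-cocycle valued in $L^\times$, kill it via $H^2(K,\mb{G}_m)=0$ for $C_1$-fields, and rescale by the resulting $1$-cochain. Your explicit mention of inflation--restriction and Hilbert 90 to place the class $[c]\in H^2(G_L,L^\times)$ inside $H^2(K,\mb{G}_m)$ is a small point the paper leaves implicit, but it is a refinement rather than a different argument.
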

 
 \begin{proof}
  Fix $\sigma \in G_L$. Consider the homomorphism $L \otimes_K L \to L$ defined by $a \otimes b$ maps to $a\sigma(b)$.
  This induces a natural morphism \[p_\sigma:X_{L} \to X_{L \otimes_K L}.\]
  Observe that the morphism $p_\sigma$ has the property that its composition with $\pr_{1,L}$ 
  \[X_{L} \xrightarrow{p_\sigma} X_{L \otimes_K L} \xrightarrow{\pr_{1,L}} X_{L}\]
  is simply the identity map and with $\pr_{2,L}$,
  \[X_{L} \xrightarrow{p_\sigma} X_{L \otimes_K L} \xrightarrow{\pr_{2,L}} X_{L}\]
  is the morphism $\sigma:X_{L} \to X_{L}$. Then, $\pr_{1,L}^*\mc{E} \cong \pr_{2,L}^*\mc{E}$ implies that 
  \[\mc{E} \, \cong \, p_\sigma^*\pr_{1,L}^*\mc{E} \, \cong \, p_\sigma^*\pr_{2,L}^*\mc{E} \, \cong \, \sigma^*\mc{E}.\]
  
  Therefore, for any $\sigma \in G_L$, there exists an isomorphism
  $\lambda'_\sigma:\mc{E} \to \sigma^*\mc{E}$. Let $\tau \in G_L$. Since $\mc{E}$ is simple, 
  \[\mr{End}(\sigma^*\tau^*\mc{E}) \cong \sigma^*\tau^*\mr{End}(\mc{E}) \cong L.\]
  Hence, there exists $a_{\sigma,\tau} \in L^\times=\mr{Aut}(\sigma^*\tau^*\mc{E})$ such that the following diagram is commutative:
  \[\begin{diagram}
     \mc{E}&\rTo^{\lambda'_\sigma}&\sigma^*\mc{E}\\
     \dTo^{\lambda'_{\sigma\tau}} &\circlearrowleft &\dTo_{\sigma^*\lambda'_\tau}\\
     \sigma^*\tau^*\mc{E}&\rTo^{a_{\sigma,\tau}}&\sigma^*\tau^*\mc{E}
    \end{diagram}\]
 This directly implies the following equalities: Given $g_1, g_2, g_3 \in G_L$, we have
 \begin{eqnarray}
  a_{g_1,(g_2g_3)} \circ \lambda'_{g_1(g_2g_3)}&=&(g_1^*\lambda'_{g_2g_3}) \circ \lambda'_{g_1} \label{ceq1}\\
  g_1^*a_{g_2g_3} \circ g_1^*\lambda'_{g_2g_3}&=&(g_1^*g_2^*\lambda'_{g_3}) \circ g_1^*\lambda'_{g_2} \label{ceq2}\\
  a_{g_1,g_2} \circ \lambda'_{g_1g_2}&=&(g_1^*\lambda'_{g_2}) \circ \lambda'_{g_1} \label{ceq3}\\
  ((g_1g_2)^*\lambda'_{g_3}) \circ \lambda'_{g_1g_2}&=&a_{(g_1g_2),g_3}\circ \lambda'_{(g_1g_2)g_3} \label{ceq4} \\
   \end{eqnarray}
  Applying $g_1^*a_{g_2,g_3} \circ -$ to both sides of \eqref{ceq1}, we get,
  \begin{eqnarray*}
   g_1^*a_{g_2,g_3} \circ a_{g_1,(g_2g_3)} \circ \lambda'_{g_1(g_2g_3)}&=&g_1^*a_{g_2,g_3} \circ(g_1^*\lambda'_{g_2g_3}) \circ \lambda'_{g_1}\\
   &=& (g_1^*g_2^*\lambda'_{g_3}) \circ g_1^*\lambda'_{g_2}\circ \lambda'_{g_1} \, \, \, \,  \, \, \, \mbox{ by } \, \eqref{ceq2}\\
   &=& (g_1^*g_2^*\lambda'_{g_3}) \circ a_{g_1,g_2} \circ \lambda'_{g_1g_2}  \, \, \, \,   \mbox{ by } \, \eqref{ceq3}\\
   &=&a_{g_1,g_2} \circ a_{(g_1g_2),g_3} \circ \lambda'_{g_1g_2g_3} \, \, \, \, \mbox{ by }  \, \eqref{ceq4}
  \end{eqnarray*}
 where the last equality follows from the fact that multiplication by a scalar $a_{g_1,g_2}$ commutes with $(g_1^*g_2^*\lambda'_{g_3})$.
 Since $\lambda'_{g_1g_2g_3}$ is an isomorphism, we have the $2$-cocycle condition:
  \[g_1^*a_{g_2,g_3} \circ a_{g_1,(g_2g_3)}=a_{g_1,g_2} \circ a_{(g_1g_2),g_3}.\]
  Since $K$ is a $C_1$ field, $H^2(K,\mb{G}_m)=0$ (see \cite[p. $161$, Proposition $10$]{ser}). 
  This means that for any sequence $(a_{\sigma,\tau})_{\sigma,\tau \in G_L}$ satisfying
  the $2$-cocycle condition there exists a continuous morphism $\phi:G_L \to L^\times$ such that 
  \[a_{\sigma,\tau}=\sigma\phi(\tau)\phi(\sigma\tau)^{-1}\phi(\sigma).\]
 Consider now the isomorphism given by 
 \[\lambda_\sigma:=\phi(\sigma)^{-1}\lambda'_\sigma:\mc{E} \xrightarrow{\lambda'_\sigma} \sigma^*\mc{E} \xrightarrow{\phi(\sigma)^{-1}} \sigma^*\mc{E}, \, \mbox{ for all } \, \sigma \in G_L.\]
 Since $\phi(\sigma)$ is scalar, it commutes with $\lambda'_{\sigma\tau}$ i.e., we have the following commutative diagram:
 \[\begin{diagram}
  \mc{E}&\rTo^{\lambda'_{\sigma \tau}}&(\sigma\tau)^*\mc{E}\\
  \dTo^{\phi(\sigma)}&\circlearrowleft&\dTo_{\phi(\sigma)}\\
  \mc{E}&\rTo^{\lambda'_{\sigma \tau}}&(\sigma \tau)^*\mc{E}
       \end{diagram}\]
 Using \eqref{ceq3} and substituting for $a_{\sigma, \tau}$, we conclude that $\sigma^*\lambda'_{\tau} \circ \lambda'_{\sigma}$ equals
 \[a_{\sigma,\tau} \circ \lambda'_{\sigma \tau}=\sigma \phi(\tau) \phi(\sigma \tau)^{-1} \phi(\sigma)  \lambda'_{\sigma \tau}=\sigma \phi(\tau) \phi(\sigma \tau)^{-1} \lambda'_{\sigma \tau} \phi(\sigma).\]
 Therefore, \[(\sigma^* \lambda_{\tau})=(\sigma \phi(\tau))^{-1} \sigma^*\lambda'_\tau=(\phi(\sigma \tau)^{-1}  \lambda'_{\sigma \tau}) \circ (\phi(\sigma)^{-1} \lambda'_\sigma)^{-1}=\lambda_{\sigma \tau} \circ \lambda_\sigma^{-1}.\]
 Hence, $(\sigma^* \lambda_{\tau}) \circ \lambda_\sigma=\lambda_{\sigma \tau}$. 
 This proves the proposition.
 \end{proof}

 \vspace{0.2 cm} 
 
 \begin{thm}\label{elw7}
 Let $L$ be a finite Galois extension of $K$ and $f:X_L \to X$ the natural morphism. The natural functor from the category $\mathfrak{S}_X$ of simple sheaves on 
 $X$ to the category $\mathfrak{SK}_{X_L}$ of $K$-simple sheaves  on $X_L$, defined by pull-back of sheaves via $f$, is an equivalence of categories. 
 \end{thm}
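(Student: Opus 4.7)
The plan is to verify that the pullback functor $f^*:\mathfrak{S}_X\to \mathfrak{SK}_{X_L}$ is fully faithful and essentially surjective. First, one checks well-definedness: if $\mc{F}$ is simple on $X$, then by flat base change $\mr{End}_{X_L}(f^*\mc{F})\cong \mr{End}_X(\mc{F})\otimes_K L=L$, so $f^*\mc{F}$ is simple on $X_L$. Moreover, the identity $f\circ \pr_{1,L}=f\circ \pr_{2,L}$ yields a canonical covering datum $\pr_{1,L}^* f^*\mc{F}\xrightarrow{\sim}\pr_{2,L}^* f^*\mc{F}$, so $f^*\mc{F}$ is $K$-simple, hence the functor lands in $\mathfrak{SK}_{X_L}$.

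Since $L/K$ is a finite Galois extension, $f:X_L\to X$ is finite, flat and surjective, hence faithfully flat and quasi-compact. Fully faithfulness of $f^*$ (with morphisms on the target side interpreted as those compatible with the covering datum, as in Proposition \ref{blrprop}) is therefore immediate once one restricts on either side to the full subcategories consisting of simple, respectively $K$-simple, sheaves.

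Essential surjectivity is the heart of the argument. Given a $K$-simple sheaf $\mc{E}$ on $X_L$, Proposition \ref{c8} produces isomorphisms $\lambda_\sigma:\mc{E}\to \sigma^*\mc{E}$, one for each $\sigma\in G_L$, satisfying the cocycle condition $(\sigma^*\lambda_\tau)\circ\lambda_\sigma=\lambda_{\sigma\tau}$. Under the canonical isomorphism $L\otimes_K L\cong \prod_{\sigma\in G_L}L$ available for a finite Galois extension, this collection is equivalent to a covering datum on $\mc{E}$ fulfilling the cocycle condition of Definition \ref{desdefi}, i.e., a genuine descent datum. Faithfully flat descent then provides a coherent sheaf $\mc{F}$ on $X$ together with an isomorphism $f^*\mc{F}\cong \mc{E}$ of sheaves with covering datum. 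To finish, one verifies $\mc{F}$ is simple by flat base change: $\mr{End}_X(\mc{F})\otimes_K L\cong \mr{End}_{X_L}(f^*\mc{F})=\mr{End}_{X_L}(\mc{E})=L$, so $\dim_K \mr{End}_X(\mc{F})=1$, forcing $\mr{End}_X(\mc{F})=K$.

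The main obstacle is already absorbed into Proposition \ref{c8}: the $C_1$ hypothesis on $K$ enters there, via the vanishing of $H^2(K,\mb{G}_m)$, in order to rectify an arbitrary family of isomorphisms $(\lambda'_\sigma)$ to a cocycle-compatible one $(\lambda_\sigma)$. Once this step is in hand, the remainder of the theorem reduces to a routine application of descent theory, together with a dimension count of the endomorphism algebra to propagate simplicity back from $X_L$ to $X$.
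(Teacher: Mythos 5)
Your proposal is correct and follows essentially the same route as the paper: one direction via Proposition \ref{blrprop}, essential surjectivity via Proposition \ref{c8} followed by Galois descent, and simplicity of the descended sheaf by a flat base change computation of the endomorphism algebra. You are in fact slightly more careful than the paper in two spots — making explicit that morphisms in $\mathfrak{SK}_{X_L}$ must be taken compatibly with the covering data for full faithfulness to hold, and spelling out the identification $L\otimes_K L\cong\prod_{\sigma\in G_L}L$ that converts the Galois cocycle into a descent datum — but these are refinements of, not departures from, the paper's argument.
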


 \begin{proof}
 It suffices to show that any simple sheaf $\mc{E}_L$ on $X_L$ is $K$-simple if and only if there exists a simple sheaf $\mc{E}$ on $X$ such that 
 $f^*\mc{E} \cong \mc{E}_L$. 
 By Proposition \ref{blrprop}, for any simple sheaf $\mc{E}$ on $X$, we have $f^*\mc{E}$ is $K$-simple on $X_{L}$.
  We now prove the converse. Let $\mc{E}_{L}$ be $K$-simple on $X_{L}$.  
  Proposition \ref{c8} implies that there exists a collection $(\lambda_\sigma)_{\sigma \in G_L}$ of isomorphisms 
  $\lambda_\sigma:\mc{E}_{L} \to \sigma^*\mc{E}_{L}$ such that $(\sigma^* \lambda_\tau)\circ \lambda_\sigma=\lambda_{\sigma \tau}$
  for any pair $\sigma, \tau \in G_L$. By Galois descent, this implies that there exists a coherent sheaf $\mc{E}$ on $X$ such that $\mc{E}_L \cong f^*\mc{E}$. 
  Since $f$ is flat, \[f^*(\mr{End}(\mc{E}))\cong \mr{End}(\mc{E}_L) \cong L.\] As $\mr{End}(\mc{E})$ is a $K$-vector space, this directly implies 
  that $\mr{End}(\mc{E}) \cong K$ i.e., $\mc{E}$ is simple.
   This proves the theorem.
 \end{proof}

\section{Moduli of $K$-geometrically stable sheaves}
Keep Notations \ref{e1}. In this section we use Theorem \ref{elw7} to prove that the set of $K$-rational points of the moduli space of geometrically stable sheaves on $X$ is in $1-1$ correspondence with the set 
of geometrically stable sheaves on $X$.

 Recall, the definition of the moduli functor of semi-stable sheaves over a projective variety $X$.
 
 \begin{defi}
 Fix $P$ the Hilbert polynomial of a coherent sheaf on $X$ with rank coprime to degree. 
 Denote by $\mc{M}_{X}(P)$ the moduli functor:
  \[\mc{M}_{X}(P) : \{\mr{Sch}/{K}\}^{\circ} \rightarrow \mr{Sets}\] 
such that for a $K$-scheme $T$,         
   \[ \mc{M}_{X}(P)(T):= \left\{ \begin{array}{l}
    \mbox{ isomorphism classes of } \mbox{ pure sheaves } \mc{F} \mbox{ on } X \times T \mbox{ flat }\\
    \mbox{ over } T \mbox{ and for every geometric point } t \in T, \  \mc{F}|_{X_t} \\
    \mbox{ is a stable sheaf with Hilbert polynomial } P \mbox{ on } X_{t}
    \end{array} \right\}/\sim \] 
  where $\mc{F} \sim \mc{G}$ if there exists an invertible sheaf $\mc{L}$ on $T$ such that $\mc{F} \cong \mc{G} \otimes p_T^*\mc{L}$, where 
  $p_T:X_T \to T$ is the natural projection map.
   \end{defi}

\begin{thm}[{\cite[Theorem $4.1$]{langmix}}]\label{langermod}
Let $R$ be a universally Japanese ring and $f : X \to S$ a projective morphism of $R$-schemes of finite type with geometrically connected fibres, and let $\mc{O}_X(1)$ be an $f$-ample line bundle. Then, for a fixed polynomial $P$, there exists a projective $S$-scheme $M_{X/S}(P)$ of finite type over $S$ which uniformly corepresents the functor

\[\mc{M}_{X/S}(P) : \{\mr{Sch}/{S}\}^{\circ} \rightarrow \mr{Sets}\] 
such that for a $S$-scheme $T$,         
\[ \mc{M}_{X/S}(P)(T):= \left\{ \begin{array}{l} 
 S \mbox{ equivalence classes of } \mbox{ pure sheaves } \mc{F} \mbox{ on } X \times_{S} T \mbox{ flat }\\
 \mbox{ over } T \mbox{ such that for every geometric point } t \in T, \ \mc{F}|_{X_t} \\ 
 \mbox{ is a semi-stable sheaf with Hilbert polynomial } P \mbox{ on } X_{t}
    \end{array} \right\}/\sim \] 
  where $\mc{F} \sim \mc{G}$ if there exists an invertible sheaf $\mc{L}$ on $T$ such that $\mc{F} \cong \mc{G} \otimes p_T^*\mc{L}$, where 
  $p_T:X\times_{S}T \to T$ is the natural projection map.
  
Moreover, there is an open subscheme $M^s_{X/S}(P)$ of $M_{X/S}(P)$ which universally corepresents the subfunctor of families of geometrically stable sheaves.
\end{thm}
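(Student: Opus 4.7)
The plan is to follow the classical Gieseker-Maruyama-Simpson GIT construction of the moduli of semistable sheaves, adapted to the relative setting over $S$ and to the universally Japanese base ring $R$.

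The first and crucial step is \emph{boundedness}: for the fixed Hilbert polynomial $P$, the family of semistable sheaves $\mc{F}$ on geometric fibres $X_t$ with $P(\mc{F})=P$ must be bounded. In characteristic zero this is classical, following from Kleiman's criterion together with a Castelnuovo-Mumford regularity bound. Over a general universally Japanese base ring this is precisely Langer's main technical contribution, obtained from a Bogomolov-type inequality and the restriction theorem for semistable sheaves valid in arbitrary characteristic, from which one bounds $h^{0}(\mc{F}(m))$ in terms of $P$.

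With boundedness in hand, I would pick $m\gg 0$ so that every semistable $\mc{F}$ with Hilbert polynomial $P$ is $m$-regular on its fibre; consequently $\mc{F}(m)$ is globally generated with $h^{0}(\mc{F}(m))=P(m)$ and vanishing higher cohomology. Such an $\mc{F}$ is then a quotient $\mc{O}_{X_t}(-m)^{\oplus P(m)}\twoheadrightarrow\mc{F}$, yielding a classifying point in the relative Quot scheme $Q=\mathrm{Quot}_{X/S}(\mc{O}_X(-m)^{\oplus P(m)},P)$, which is projective over $S$ by Grothendieck. I would then cut out the locally closed $R\subset Q$ parametrising those quotients whose kernel induces an isomorphism on global sections of the twist and whose $\mc{F}$ is semistable with Hilbert polynomial $P$. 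The natural $\GL(P(m))$-action on $R$ has the diagonal $\mathbb{G}_m$ acting trivially, and the $\mathrm{PGL}(P(m))$-orbits in $R$ are precisely the isomorphism classes of the parametrised sheaves.

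To produce $M_{X/S}(P)$ I would form the relative GIT quotient $R^{ss}/\!/\mathrm{PGL}(P(m))$ with respect to the linearisation induced by Grothendieck's embedding of $Q$ into a relative Grassmannian. The decisive verification is that GIT semistability on $R$ coincides with sheaf-theoretic semistability of $\mc{F}$: Hilbert-Mumford one-parameter subgroups correspond to weighted filtrations of $\mc{F}$, and the numerical comparison of weights reproduces the inequality on reduced Hilbert polynomials. The open stable locus $R^{s}$ descends to a geometric quotient, which gives $M^{s}_{X/S}(P)$ as an open subscheme universally corepresenting families of geometrically stable sheaves. The main obstacle throughout is the boundedness step in arbitrary characteristic over a general Noetherian base; once that is available, universal corepresentability, projectivity of $M_{X/S}(P)$, and the identification of the stable locus are routine GIT/descent arguments.
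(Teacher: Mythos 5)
This is a cited result (Langer, \emph{Duke Math.\ J.}\ 2004), for which the paper gives no proof beyond the reference and a brief review of the construction, and your outline reproduces essentially that same strategy: Langer's boundedness theorem in arbitrary characteristic, the Quot-scheme parametrisation via an $m$-regularity bound, the $\GL(P(m))$-action, and the identification of GIT semistability with sheaf semistability. The only point worth flagging is that over a base of mixed characteristic the quotient cannot be taken by classical (linearly reductive) GIT; one needs Seshadri's geometric reductivity over an arbitrary base, which is exactly the ingredient the paper singles out when it reviews the construction.
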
 
 
\begin{rem}
Note that a $C_1$-field $K$ is a universally Japanese ring. If the rank and degree of coherent sheaves with Hilbert polynomial $P$, are coprime, then stability and semi-stability coincide (see \cite[Lemmas $1.2.13$ and $1.2.14$]{huy}). 
Then by Theorem \ref{langermod}, there exists a projective $K$-scheme of finite type $M_{X}(P)$, universally corepresenting the functor $\mc{M}_{X}(P)$.
 \end{rem}
  
We now review briefly the construction of the moduli scheme $M_X(P)$.
By \cite[Theorem $4.2$]{langmix}, there exists an integer $e$ such that any semi-stable sheaf on $X$ with Hilbert polynomial $P$ is $e$-regular (in the sense of Castelnuovo-Mumford regularity). 
 Fix such an integer $e$. Denote by $\mc{H}:=\mo_{X}(-e)^{\oplus P(e)}$ and by $\mr{Quot}_{\mc{H}/X/P}$ the Quot scheme parametrizing all quotients of the form 
$\mc{H} \twoheadrightarrow \mc{Q}_0$, where $\mc{Q}_0$ has Hilbert polynomial $P$ (see \cite[\S $4.4$]{S1} for more details).

 Let $\mc{R}$ be the subset of $\mr{Quot}_{\mc{H}/X/P}$ consisting of all points which parametrize 
 quotients of the form $\mc{H}\twoheadrightarrow \mc{Q}_0$ such that $\mc{Q}_0$ is semi-stable and 
 $H^0(\mc{Q}_0(e))$ is (non-canonically) isomorphic to $k^{\oplus P(e)}$.
   Now, semi-stability is an open condition (see \cite[Proposition $2.3.1$]{huy}). Therefore, $\mc{R}$ is an open subscheme in $\mr{Quot}_{\mc{H}/X/P}$.   
   The group $\mr{GL}(P(e))=\mr{Aut}(\mc{H})$ acts on $\mr{Quot}_{\mc{H}/X/P}$ from the right by the composition $[\rho] \circ g=[\rho \circ g]$, where 
      $[\rho:\mc{H} \to \mc{F}] \in \mr{Quot}_{\mc{H}/X/P} \mbox{ and } g \in \mr{GL}(P(e))$. By \cite[Theorem $4.3$]{langmix}, $\mc{R}$ is the set of semi-stable points of $\mr{Quot}_{\mc{H}/X/P}$
   under this group action. The moduli scheme 
   $M_X(P)$ of semi-stable sheaves on $X$ with Hilbert polynomial $P$ is the geometric quotient of $\mc{R}$ 
   under this action (see \cite[pp. $582$, after Theorem $4.3$]{langmix}). Denote by 
   \begin{equation}\label{cn14}
    \pi:\mc{R} \to M_X(P)
   \end{equation}
 the corresponding quotient morphism. The quotient exists due to Seshadri's result \cite[Theorem $4$]{sesred}.

\begin{thm}\label{elw9}
 There is a $1-1$ correspondence between the set of $K$-rational points of $M_X(P)$ and the set of 
isomorphism classes of geometrically stable sheaves on $X$ with Hilbert polynomial $P$. 
\end{thm}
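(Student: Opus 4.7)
The plan is to construct mutually inverse maps between $M_X(P)(K)$ and the set of isomorphism classes of geometrically stable sheaves on $X$ with Hilbert polynomial $P$, using Theorem \ref{elw7} as the key descent engine. In one direction, a geometrically stable sheaf $\mc{E}$ on $X$ determines, by the corepresentability of $\mc{M}_X(P)$ recorded in Theorem \ref{langermod}, a morphism $\msp K \to M_X(P)$, i.e.\ a $K$-rational point.

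The more substantial direction is to produce a sheaf from a given $K$-rational point $x$. First, I would base-change to $\overline{K}$: the closed points of $M_X(P)_{\overline{K}} = M_{X_{\overline{K}}}(P)$ are in bijection with isomorphism classes of geometrically stable sheaves on $X_{\overline{K}}$, so $x$ yields such a sheaf $\mc{E}_{\overline{K}}$. Coherent sheaves being of finite presentation, and $\overline{K}$ a filtered colimit of its finite Galois subextensions of $K$, $\mc{E}_{\overline{K}}$ descends to some $\mc{E}_L$ on $X_L$ for a finite Galois $L/K$. Since $x$ is $K$-rational, for every $\sigma \in G_L$ the sheaf $\sigma^*\mc{E}_L$ represents $\sigma(x) = x$ in $M_X(P)(L)$, and the coarse moduli interpretation forces $\sigma^*\mc{E}_L \cong \mc{E}_L$. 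Using the decomposition $L \otimes_K L \cong \prod_{\sigma \in G_L} L$, I can package such a family of isomorphisms into a single covering datum $\pr_{1,L}^*\mc{E}_L \xrightarrow{\sim} \pr_{2,L}^*\mc{E}_L$, so $\mc{E}_L$ is $K$-simple. Theorem \ref{elw7} then produces a simple sheaf $\mc{E}$ on $X$ with $f^*\mc{E} \cong \mc{E}_L$, and geometric stability of $\mc{E}$ follows from that of its pull-back.

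For injectivity, suppose geometrically stable sheaves $\mc{E}$ and $\mc{F}$ on $X$ have the same image in $M_X(P)(K)$. Then $f^*\mc{E} \cong f^*\mc{F}$ on $X_L$, so $\Hom_{X_L}(f^*\mc{E}, f^*\mc{F}) \cong L$. Applying Proposition \ref{blrprop} to $\mc{H}om(\mc{E},\mc{F})$ together with semilinear Galois descent on this one-dimensional $L$-space, $\Hom_X(\mc{E},\mc{F})$ is one-dimensional over $K$, and any nonzero element is an isomorphism because $\mc{E}$ and $\mc{F}$ are stable with matching Hilbert polynomials.

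The main obstacle is the step assembling the isomorphisms $\sigma^*\mc{E}_L \cong \mc{E}_L$ into a genuine covering datum and then descending. The cocycle condition, normally needed for descent, is sidestepped by Theorem \ref{elw7}, and it is precisely there that the $C_1$ hypothesis on $K$ enters (via the vanishing of $H^2(K,\mb{G}_m)$, as in the proof of Proposition \ref{c8}). A secondary subtlety is that $M_X(P)$ only corepresents the moduli functor, so a $K$-rational point does not immediately yield a sheaf on $X$; one must pass to an extension of $K$ and then descend.
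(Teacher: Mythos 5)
Your argument is correct, and its skeleton --- produce a geometrically stable sheaf $\mc{E}_L$ on $X_L$ for a finite Galois extension $L/K$ lying over the point $x$, show $\sigma^*\mc{E}_L\cong\mc{E}_L$ for all $\sigma\in G_L$, assemble these isomorphisms into a covering datum via $L\otimes_K L\cong\prod_{\sigma\in G_L}L$, and invoke Theorem \ref{elw7} --- is exactly the paper's. The one place you genuinely diverge is the extraction of $\mc{E}_L$ from $x$. The paper never passes to $\ov{K}$: it base-changes the quotient map $\pi:\mc{R}\to M_X(P)$ of \eqref{cn14} along $x$, picks a closed point $y$ of the fibre $\mc{R}_K$ (automatically $L$-rational for some finite $L$), and uses representability of the Quot functor to read off a quotient $\mc{H}_L\twoheadrightarrow\mc{E}_L$; the isomorphisms $f_\sigma^*\mc{E}_L\cong\mc{E}_L$ then come from the fact that $y$ and $y\circ\sigma$ lie in the same fibre of $\pi$. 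You instead use that $M^s$ universally corepresents, identify $\ov{K}$-points of $M_X(P)_{\ov{K}}$ with stable sheaves on $X_{\ov{K}}$, and spread $\mc{E}_{\ov{K}}$ out to a finite level by a limit argument. Both routes work: the paper's is more elementary (no limit argument, nothing about coarse moduli beyond what Theorem \ref{langermod} states), while yours isolates the coarse-moduli input more cleanly and supplies an explicit injectivity argument (a nonzero homomorphism between stable sheaves with equal Hilbert polynomial is an isomorphism), a point the paper leaves as ``easy to check''. Two small remarks: for injectivity, flat base change $\Hom_X(\mc{E},\mc{F})\otimes_K\ov{K}\cong\Hom_{X_{\ov{K}}}(\mc{E}_{\ov{K}},\mc{F}_{\ov{K}})$ already produces a nonzero homomorphism over $K$, so Proposition \ref{blrprop} and semilinear descent are not needed there; and the same ``nonzero hom is an iso'' argument is what actually upgrades your assertion that $\sigma^*\mc{E}_L$ and $\mc{E}_L$ represent the same point to an isomorphism over $L$ rather than merely over $\ov{K}$ --- worth spelling out, though the paper glosses the analogous step too.
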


\begin{proof}
Let $x:\Spec(K) \to M_X(P)$ be a $K$-rational point of $M_X(P)$. Denote by $\mc{R}_K$ the base change of the morphism 
$\pi$ in \eqref{cn14} by the morphism $x$. Let $y \in \mc{R}_K$ be a closed point. Then $y$ is a $L$-rational 
point on $\mc{R}_K$ for some finite extension $L$ of $K$. Without loss of generality assume that $L$ is a finite 
Galois extension of $K$. Since the Quot functor is representable, the point $y$ corresponds to a quotient $\phi_y:\mc{H}_L \twoheadrightarrow \mc{E}_L$ on $X_L:=X \times_K \Spec(L)$, where 
$\mc{H}_L \cong \mc{H} \otimes_K L$ and $\mc{E}_L$ is geometrically stable with Hilbert polynomial $P$ on $X_L$.
For any $\sigma \in \mr{Gal}(L/K)$ and the induced morphism $f_\sigma:X_L \xrightarrow{\mr{id}_X \times \sigma} X_L$, denote by $f_\sigma^*\phi_y$ the quotient morphism,
\[\mc{H}_L \cong f_\sigma^*\mc{H}_L \twoheadrightarrow f_\sigma^*\mc{E}_L.\]
By the universal property of Quot scheme, $f_\sigma^*\phi_y$ corresponds to the composed morphism:
\[\Spec(L) \xrightarrow{\sigma} \Spec(L) \xrightarrow{y} \mc{R}_K.\]
Since $\mc{R}_K$ is a fiber to the morphism $\pi$, this implies $f_\sigma^*\mc{E}_L \cong \mc{E}_L$.
As a result we obtain a set $(\lambda_\sigma)_{\sigma \in \mr{Gal}(L/K)}$ of isomorphisms $\lambda_\sigma:\mc{E}_L \to \sigma^*\mc{E}_L$.

Consider now the morphism 
 \[\Phi:L \otimes_K L \to \coprod\limits_{\sigma \in \mr{Gal}(L/K)} L, \, \mbox{ defined by } \, a \otimes b \mapsto (a\sigma(b))_{\sigma \in \mr{Gal}(L/K)}.\]
 It is easy to check that $\Phi$ is an isomorphism. Consider now the induced morphisms,
 \[\Phi_i:\coprod\limits_{\sigma \in \mr{Gal}(L/K)} X_L \xrightarrow{\Phi} X_{L \otimes_K L} \xrightarrow{\pr_{i,L}} X_L \, \mbox{ for } i=1,2.\]
 Observe that \[\Phi_1=\coprod \mr{id}\, \mbox{ and } \,\Phi_2=\coprod\limits_{\sigma \in \mr{Gal}(L/K)} f_\sigma,\]
 where $f_\sigma:X_L \xrightarrow{\mr{id}_X \times \sigma} X_L$. The set of isomorphims $(\lambda_\sigma)_{\sigma \in \mr{Gal}(L/K)}$
 then induce an isomorphism $\psi:\Phi_1^*\mc{E}_L \xrightarrow{\sim} \Phi_2^*\mc{E}_L$. Since $\Phi$ is an isomorphism, $\psi$ induces 
 an isomorphism $\pr_{1,L}^*\mc{E}_L \xrightarrow{\sim}\pr_{2,L}^*\mc{E}_L$ i.e., $\mc{E}_L$ is $K$-geometrically stable.
 
 Since $\mc{E}_L$ is geometrically stable, $\mc{E}_{\ov{K}}:=\mc{E}_L \otimes_L \ov{K}$ is stable. By Lemma  \ref{stabsimp}, $\mc{E}_{\ov{K}}$
 is simple. Since $\mr{End}(\mc{E}_L)$ is an $L$-vector space and 
 \[\mr{End}(\mc{E}_L) \otimes_L \ov{K} \cong \mr{End}(\mc{E}_{\ov{K}}) \cong \ov{K}\]
 we conclude that $\mr{End}(\mc{E}_L) \cong L$ i.e., $\mc{E}_L$ is simple. In particular, $\mc{E}_L$ is $K$-simple.
 Using Theorem \ref{elw7}, there exists a simple sheaf $\mc{E}$ on $X$ such that $\mc{E} \otimes_K L \cong \mc{E}_L$. 
 By \cite[Theorem $1.3.7$]{huy}, it follows that $\mc{E}$ is geometrically stable.
 
 Conversely, by Theorem \ref{langermod}, any geometrically stable sheaf on $X$ with Hilbert polynomial $P$ corresponds to a
 $K$-rational point on $M_X(P)$. It is easy to check that the geometrically stable sheaf $\mc{E}$ corresponds to the $K$-rational point $x$ of 
 $M_X(P)$. This gives us a $1-1$ correspondence between the $K$-rational points of $M_X(P)$ and the 
 set of geometrically stable sheaves on $X$ with Hilbert polynomial $P$.
 This proves the theorem.
 \end{proof}
 
 \section{Applications of descent theory}\label{sec4}
 
 In this section, we mention the application of descent theory studied before.
 Recall, the definition of a $C_1$ field given in the introduction.
 
\begin{exa} We state without proof some examples of $C_1$ fields:
\begin{enumerate}
 \item An algebraically closed field is trivially $C_{1}$.
 \item Finite fields are $C_{1}$ (see \cite{C}).
 \item The function field of an irreducible curve defined over an algebraically closed field is $C_{1}$(see \cite{T}).
 \item Let $R$ be a Henselian discrete valuation ring of characteristic $0$ with residue field denoted $k$, of characteristic $p$ and fraction field denoted $K$.
If $k$ is algebraically closed, then $K$ is $C_1$ (see \cite[Theorem $14$]{lang1}).
\end{enumerate}
\end{exa}

\begin{defi}
A variety $Y$ over an algebraically closed field $\ov{K}$ is \emph{separably rationally connected}\index{rationally connected! separably} if there exists a morphism $f: \mathbf{P}^{1} \to Y$ such that $f^{*}(T_Y)$ is ample.
\end{defi}

\begin{rem}\label{rcsrc} 
Note that over an algebraically closed field $\ov{K}$ of characteristic $0$, rationally connected is equivalent to separably rationally connected (see \cite[Proposition $IV.3.3.1$]{K}).
\end{rem}

\noindent
\textbf{The $C_{1}$ conjecture} (Lang-Manin-Koll\'ar):
A smooth, proper, separably rationally connected variety over a $C_1$ field always has a rational point.

The conjecture has already been proven for various $C_{1}$ fields (see \cite[Chapter $2$]{ind} for a complete list).

\begin{rem}
The conjecture remains open in the case when the $C_{1}$ field is the fraction field of a maximal unramified discrete valuation ring with algebraically closed residue field of mixed characteristic.

Recently, the conjecture was shown to hold trivially for certain rationally connected varieties 
over such fields (see \cite{ind2}).
Let $M^s_{X_{K},\mc{L}_{K}}(r,d)$ be the moduli space of geometrically stable locally free sheaves of rank $r$ and determinant 
$\mc{L}_K$. Denote by  $M^s_{X_{\ov{K}},\mc{L}_{\ov{K}}}(r,d)$ 
the moduli space of geometrically stable locally free sheaves of  rank $r$ and determinant $\mc{L}_{\ov{K}}:= \mc{L}_K \otimes_{K} \ov{K}$ over the curve $X_{\ov{K}}:= X_K \times_K \msp(\ov{K})$. 
By \cite{sesh}, $M^s_{X_{\ov{K}},\mc{L}_{\ov{K}}}(r,d)$ is a unirational variety and therefore rationally connected. Since the moduli space $M^s_{X_{\ov{K}},\mc{L}_{\ov{K}}}(r,d)$ is the base change 
$M^s_{X_{K},\mc{L}_{K}}(r,d) \times_{K} \msp(\ov{K})$, this implies $M^s_{X_{K},\mc{L}_{K}}(r,d)$ is 
rationally connected. 
Suppose that $K$ is the fraction field of a Henselian discrete valuation ring with algebraically closed residue field.
Then, $M^s_{X_{K},\mc{L}_{K}}(r,d)$ has a $K$-rational point.
This is shown using descent theory of stable sheaves on smooth, projective curves (\cite[Theorem $1.2$]{ind2}).
It is natural to ask how the descent theory of simple sheaves over $C_1$ fields can be used to prove the existence of rational points, in the
higher dimension case. We cite some recent results in this direction.
\end{rem}

 \begin{defi} 
  Recall, the \emph{index} of $X$, denoted $\mr{ind}(X)$, is the gcd of the set of degrees of zero dimensional cycles on $X$.
 \end{defi}
 
 \begin{rem}
  Clearly, the notion of index generalizes the idea of having a rational point. Applying Theorem \ref{elw7} to the case of 
  invertible sheaves on varieties, one can obtain a sufficient criterion for any projective variety defined over a $C_1$ field to have index $1$.
 \end{rem}

\begin{defi}
  Denote by $G$ the absolute Galois group $\mr{Gal}(\ov{K}/K)$. An invertible sheaf $\mc{L}_{\ov{K}}$ on $X_{\ov{K}}:=X \times_K \Spec(\ov{K})$ is called $G$-\emph{invariant}
  if for any $\sigma \in G$ and the corresponding morphism $\sigma:X_{\ov{K}} \to X_{\ov{K}}$, we have $\sigma^*\mc{L}_{\ov{K}} \cong \mc{L}_{\ov{K}}$.
  Denote by $\Lambda \subset \mr{Pic}(X_{\ov{K}})$ the subgroup of $\mr{Pic}(X_{\ov{K}})$ consisting of all $G$-invariant invertible sheaves on $X_{\ov{K}}$.
  Denote by $e:=\mr{gcd}\{\chi(\mc{L}_{\ov{K}})|\mc{L}_{\ov{K}} \in \Lambda\}$. We call $e$ the \emph{linear index of} $X$, denoted $\mr{lin-ind}(X)$.
 \end{defi}
 

 \begin{thm}\label{thm1}
 Suppose that $H^1(\mathcal{O}_X)=0$, $\mr{Pic}(X_{\ov{K}})$ is of rank $r$, generated by $\mc{L}_1, ..., \mc{L}_{r-1}$ and  $\mc{L}_r:=H_{\ov{K}}=H \otimes_K \ov{K}$
   satisfying the following conditions:
   \begin{enumerate}
    \item the ideal $(\deg(\mc{L}_1),\deg(\mc{L}_2),...,\deg(\mc{L}_r))$ in $\mb{Z}$ generated by $\deg(\mc{L}_i)$ for $i=1,...,r$ coincides with the ideal $(1)$,
    \item for any $r \times r$-matrix $A=(a_{i,j})$ with integral entries $a_{i,j}$, $a_{r,k}=0$ for all $k<r$, $a_{r,r}=1$, $A \not=\mr{Id}$ and $A^t=\mr{Id}$ for some $t>0$, 
    we have $\sum_j a_{ij} \deg(\mc{L}_j)\not= \deg(\mc{L}_i)$ for some $i>0$. 
   \end{enumerate}
  Then, each $\mc{L}_i$ is $G$-invariant, $\mr{lin-ind}(X)=\mr{gcd}\{\chi(\mc{L}_i(n))| i=1,...,r \mbox{ and } n \in \mb{Z}\}=1$ and 
  \[\mr{ind}(X)=1 \, \mbox{ if } \mr{char}(k)=0 \mbox{ and } \mbox{ prime-to-p part of } \mr{ind}(X) \mbox{ equals } 1 \mbox{ if } \mr{char}(k)=p>0.\]
   \end{thm}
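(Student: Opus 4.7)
The plan splits into three stages mirroring the three conclusions of the theorem.

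\emph{Stage 1 ($G$-invariance of each $\mc{L}_i$).} Under the hypothesis $H^1(\mc{O}_X)=0$ the group $\mr{Pic}(X_{\ov{K}})$ is discrete, and by the generating assumption it is free of rank $r$ on $(\mc{L}_1,\ldots,\mc{L}_r)$. For $\sigma\in G$, pullback gives an automorphism $A_\sigma\in\mr{GL}_r(\mb{Z})$. Since $\mc{L}_r=H_{\ov{K}}$ is defined over $K$, one has $\sigma^*\mc{L}_r\cong\mc{L}_r$, forcing the $r$-th row of $A_\sigma$ to equal $(0,\ldots,0,1)$. The Galois action factors through a finite quotient, so $A_\sigma^t=\mr{Id}$ for some $t>0$; moreover $\sigma$ preserves the degree homomorphism $\deg(-)=(-)\cdot H^{d-1}$ (where $d=\dim X$) because $H$ is $K$-rational, giving $\sum_j(A_\sigma)_{ij}\deg(\mc{L}_j)=\deg(\mc{L}_i)$ for every $i$. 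The contrapositive of hypothesis (2) then forces $A_\sigma=\mr{Id}$, so each $\mc{L}_i$ is $G$-invariant.

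\emph{Stage 2 ($\mr{lin-ind}(X)=1$).} Every $\mc{L}_i(n):=\mc{L}_i\otimes H_{\ov{K}}^{\otimes n}$ is a tensor product of $G$-invariants, hence itself $G$-invariant, so $\mr{lin-ind}(X)\mid \gcd_{i,n}\chi(\mc{L}_i(n))$. I would show this gcd is $1$ by a polynomial-differences argument. Writing $P_i(n):=\chi(\mc{L}_i(n))$, a polynomial in $n$ of degree $d$ (Snapper), the $(d-1)$-st finite difference of $P_i(n)-P_r(n)$ extracts $\deg(\mc{L}_i)-\deg(\mc{L}_r)$, while the $d$-th finite difference of $P_r(n)$ extracts $H^d=\deg(\mc{L}_r)$. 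Both are $\mb{Z}$-linear combinations of values of the $P_i$, so the gcd divides every $\deg(\mc{L}_i)$ and by hypothesis (1) divides $\gcd_i\deg(\mc{L}_i)=1$.

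\emph{Stage 3 (Bounding $\mr{ind}(X)$).} Choose a finite Galois extension $L/K$ over which each $\mc{L}_i(n)$ is defined, with $L$-forms $\mc{L}_i^L(n)$ on $X_L$. Via the decomposition $\mr{Spec}(L\otimes_K L)\cong\coprod_{\sigma\in\mr{Gal}(L/K)}\mr{Spec}(L)$, the $G$-invariance of Stage 1 assembles into isomorphisms $\pr_{1,L}^*\mc{L}_i^L(n)\cong\pr_{2,L}^*\mc{L}_i^L(n)$; since line bundles on a geometrically connected variety are simple, each $\mc{L}_i^L(n)$ is $K$-simple. Theorem \ref{elw7} now produces a line bundle $\widetilde{\mc{L}}_i(n)$ on $X$ whose pullback is $\mc{L}_i^L(n)$, so $\chi(\widetilde{\mc{L}}_i(n))=\chi(\mc{L}_i(n))$ by flat base change. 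In characteristic zero, Hirzebruch--Riemann--Roch expresses each $\chi(\widetilde{\mc{L}}_i(n))$ as a $\mb{Z}$-linear combination of degrees of zero-cycles on $X$, hence as a multiple of $\mr{ind}(X)$; combined with Stage 2 this yields $\mr{ind}(X)\mid 1$.

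The principal obstacle is the positive-characteristic refinement. In characteristic $p$, the denominators appearing in the Todd class in Hirzebruch--Riemann--Roch are divisible by powers of $p$, so the representation of $\chi(\widetilde{\mc{L}}_i(n))$ as an integer combination of degrees of integral zero-cycles only holds after inverting $p$. Tracking this $p$-adic loss precisely is the technical heart of the argument and is what degrades the final conclusion from $\mr{ind}(X)=1$ to the statement that the prime-to-$p$ part of $\mr{ind}(X)$ equals $1$.
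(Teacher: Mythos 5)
The paper does not actually contain a proof of Theorem \ref{thm1}; it only says ``See \cite{indmad}'', so there is nothing internal to compare against. Judged on its own terms, your Stages 1 and 2 are sound: the Galois action on the free group $\mr{Pic}(X_{\ov{K}})$ gives finite-order integral matrices fixing the class of $H_{\ov{K}}$ and preserving degrees, so hypothesis (2) forces them to be the identity; and the finite-difference argument on the Snapper polynomials $\chi(\mc{L}_i(n))$ correctly shows the gcd of these Euler characteristics divides each $\deg(\mc{L}_i)$, hence equals $1$ by hypothesis (1).

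Stage 3, however, contains a genuine error. The claim that in characteristic zero Hirzebruch--Riemann--Roch writes $\chi(\widetilde{\mc{L}}_i(n))$ as a $\mb{Z}$-linear combination of degrees of integral zero-cycles on $X$ is false: the Todd class has denominators ($1/2$, $1/12$, \dots) in \emph{every} characteristic, so HRR only exhibits $\chi$ as the degree of a $\mb{Q}$-cycle, which gives no divisibility by $\mr{ind}(X)$. Concretely, a smooth conic $C$ over $\mb{Q}_p$ without rational points has $\mr{ind}(C)=2$ but $\chi(\mc{O}_C)=\tfrac{1}{2}\deg(-K_C)=1$, so the implication ``$\mc{L}$ defined over $K$ $\Rightarrow$ $\mr{ind}(X)\mid\chi(\mc{L})$'' on which your Stage 3 rests is simply wrong, and the deduction $\mr{ind}(X)\mid\gcd_{i,n}\chi(\mc{L}_i(n))=1$ does not follow. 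The characteristic-$p$ caveat in the statement is likewise misattributed: it cannot come from Todd denominators (which are present in characteristic $0$ too) but rather from inseparability --- the $\mc{L}_i$ live on $X_{\ov{K}}$ and may only be defined over a purely inseparable extension of a Galois extension, and pushing zero-cycles down such an extension multiplies degrees by powers of $p$. The natural repair is to bypass Euler characteristics entirely in this stage: once each $\mc{L}_i$ descends to a line bundle on $X$ over $K$ (or over a purely inseparable extension in characteristic $p$), twist to make it very ample and intersect with $d-1$ general hyperplane sections of $H$ chosen over $K$; this produces a $K$-rational zero-cycle of degree $\deg(\mc{L}_i)$ (up to a power of $p$ in positive characteristic), whence $\mr{ind}(X)$ divides $\gcd_i\deg(\mc{L}_i)=1$ by hypothesis (1), which is used here directly rather than through Stage 2.
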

By prime-to-p part of $N$ we mean the largest divisor of $N$ which is prime to $p$. 

\begin{proof}
 See \cite{indmad} for the proof.
\end{proof}

As a consequence of Theorem \ref{thm1}, we obtain numerous examples of smooth, projective varieties on $C_1$-fields with index $1$.  
\begin{exa}\label{elw8}
Let $X$ be a smooth, projective variety with $\deg(H_{\ov{K}})>2$, $H^1(\mathcal{O}_X)=0$, $\mr{Pic}(X_{\ov{K}})$ is of rank $2$ and there exists an invertible sheaf $\mc{L}_0$ of degree coprime to $\deg(H_{\ov{K}})$
(for example, a smooth surface $X$ in $\mb{P}^3_K$ of degree at least $3$ with $\mr{rk}(\mr{Pic}(X_{\ov{K}}))=2$ and $X_{\ov{K}}$ contains a curve of degree coprime to $\deg(X)$).
Theorem \ref{thm1} implies that every invertible sheaf on $X_{\ov{K}}$ is $G$-invariant and $\mr{ind}(X)=\mr{lin-ind}(X)=1.$
\end{exa}

\end{document}